\DeclareMathSymbol{\rightrightarrows}  {\mathrel}{AMSa}{"13}
\def\Shv{\mathbf{Shv}}
\def\Pre{\mathbf{Pre}}
\def\Mon{\mathbf{Mon}}
\def\Fuz{\mathbf{Fuzz}}
\def\Sub{\mathbf{Sub}}
\def\Im{\operatorname{Im}}
\def\varholim@#1#2{\mathop{\vtop{\ialign{##\crcr
 \hfil$#1\m@th\operator@font holim$\hfil\crcr
 \noalign{\nointerlineskip\kern\ex@}#2#1\crcr
 \noalign{\nointerlineskip\kern-\ex@}\crcr}}}}
\def\hocolim{\mathpalette\varholim@\rightarrowfill@} 
\def\hoinvlim{\mathpalette\varholim@\leftarrowfill@}
\newtheorem{theorem}{Theorem}
\newtheorem{lemma}[theorem]{Lemma}
\newtheorem{corollary}[theorem]{Corollary}
\theoremstyle{definition}
\newtheorem{example}[theorem]{Example}
\newtheorem{remark}[theorem]{Remark}
\begin{document}

\title{Fuzzy sets and presheaves}
\author{John F.\ Jardine}
\affiliation{Department of Mathematics, University of Western Ontario,
  London, Ontario, Canada}
\email{jardine@uwo.ca}
\thanks{Supported by NSERC.}
\maketitle

\begin{abstract}
\noindent
This paper presents a presheaf theoretic approach to the construction of fuzzy sets, which builds on Barr's description of fuzzy sets as sheaves of monomorphisms on a locale. Presheaves are used to give explicit descriptions of limit and colimit descriptions in fuzzy sets on an interval. The Boolean localization construction for sheaves on a locale specializes to a theory of stalks for sheaves and presheaves on an interval.

The system $V_{\ast}(X)$ of Vietoris-Rips complexes for a data set $X$ is both a simplicial fuzzy set and a simplicial sheaf in this general framework. This example is explicitly discussed through a series of examples.
\end{abstract}

\section{Introduction}

Fuzzy sets were originally defined to be functions
$
\psi: X \to [0,1]
$
that take values in the unit interval \cite{DuPr}.

Michael Barr changed the game in his paper \cite{BarrF}: he replaced the unit interval by a more general well-behaved poset, a locale $L$, and redefined fuzzy sets to be functions
\begin{equation*}
\psi: X \to L.
\end{equation*}
These functions are the fuzzy sets over $L$, and form a category $\Fuz(L)$ that is described in various ways below.

Locales are complete lattices in which finite intersections distribute over all unions. The unit interval $[0,1]$ qualifies, but so does the poset of open subsets of a topological space, and locales are models for spaces in this sense.

Every locale $L$ has a Grothendieck topology, with coverings defined by joins, and so one is entitled to a presheaf category $\Pre(L)$ and a sheaf category $\Shv(L)$ for such objects. Presheaves on $L$ are contravariant set-valued functors on $L$, and sheaves are presheaves that satisfy a patching condition with respect to the Grothendieck topology on $L$.

Barr showed that, starting with a fuzzy set $\psi: X \to L$ over $L$, one can pull back over subobjects to define a sheaf $T(\psi)$, which is a sheaf for which all restriction maps are injections. 
Technically, one needs to adjoin a new zero object to $L$ to make this work, giving a new (but not so different) locale $L_{+}$. The resulting object $T(\psi)$ is a sheaf of monomorphisms on $L_{+}$ in the sense that all non-trivial restriction maps are injective functions.

Write $\Mon(L_{+})$ for the category of sheaves of monomorphisms on $L_{+}$. Barr showed \cite{BarrF} that his functor
\begin{equation*}
T: \Fuz(L) \to \Mon(L_{+})
\end{equation*}
is part of a categorical equivalence. This result appears as Theorem \ref{th 12} in the first section of this paper.

The inverse functor for $T$ on a sheaf $F$ is constructed by taking the {\it generic fibre} $F(i)$, and constructing a function $\psi_{F}: F(i) \to L$. The set $F(i)$ is the set of sections corresponding to the initial object $i$ of $L$.  Given an element $x \in F(i)$, there is a maximum $s_{x} \in L$ such that $x$ is in the image of the monomorphism $F(s_{x}) \to F(i)$, and one defines $\psi_{F}(x)$ to be this element $s_{x}$.
\medskip

The first section of this paper is largely expository and self contained. We set notation and introduce the main examples in modern terms, and present a proof of the the Barr result (Theorem \ref{th 12}) that is expressed in this newer language.

Examples \ref{ex 6} and Example \ref{ex 10} show, respectively, that the Vietoris-Rips filtration corresponding to a data set has the structure of a simplicial fuzzy set and (through the Barr theorem) a simplicial sheaf.

To proceed with applications, for example if one wants to sheafify peristent homology theory or clustering and use fuzzy sets to do it, or to say anything about the homotopy types of simplicial objects, it is helpful to have more explicit information about how fuzzy sets are constructed. 
One needs, in particular, straightforward descriptions of
basic constructions such as limits, colimits and stalks in the fuzzy set category, or rather in the associated category of sheaves of monomorphisms.
The difficulties, such as they are, arise from the fact that the category $\Mon(L_{+})$ of sheaves of monomorphisms is not quite a sheaf category, and constructing the fuzzy set $\psi_{F}: F(i) \to L$ from a sheaf $F$ can be a bit interesting.

These issues are dealt with in Sections 2 and 3 of this paper. There is a perfectly good category  $\Mon_{p}(L_{+})$ of presheaves of monomorphisms, and it turns out that if $L$ is sufficiently well behaved (as is the unit interval $[0,1]$), then the associated sheaf functor is easily described and preserves presheaves of monomorphisms. The upshot is that one can make constructions on the presheaf category, as a geometer or topologist would, and then sheafify. 

Limits are formed as in the ambient sheaf category, meaning sectionwise, but colimits are more involved. The inclusion $\Mon(L_{+}) \subset \Shv(L_{+})$ of sheaves of monomorphisms in all sheaves has a left adjoint $F \mapsto \Im(F)$, called the {\it image functor}, which is defined by taking images of sets of sections in the generic fibre --- see Lemma \ref{lem 26}. This observation allows one to define colimits of diagrams $A(i)$ in $\Mon(L_{+})$: take the presheaf theoretic colimit $\varinjlim_{i}\ A(i)$, and then apply the image functor (sheafified) to get the colimit in $\Mon(L_{+})$. 

The image functor and colimit constructions are described in Section 2. That section also contains the formal definitions and properties around presheaves of monomorphisms.

One has the nicest form of the associated sheaf functor for presheaves on a locale $L$ when one assumes that $L$ is an interval (Lemma \ref{lem 22}). The interval assumption on $L$ is consistent with the classical theory of fuzzy sets and with the intended applications in Topological Data Analysis. 

The general theory of Boolean localization for sheaves and presheaves on a locale $L$ is relatively straightforward to describe, and is the starting point for Section 3 of this paper.

Every locale $L$ has a standard imbedding $\omega: L \to B$ into a complete Boolean algebra, by a rather transparent construction that is displayed here (see also \cite{MM}, \cite{J21}, \cite{LocHom}, for example). This is the easier part of the general Boolean localization construction --- the more interesting bit is the construction of the Diaconescu cover, which faithfully imbeds a Grothendieck topos in the topos of sheaves on a locale.

If the locale $L$ is an interval, then the corresponding Boolean algebra $B$ is the set of subsets of some set (Example \ref{ex 31}), so that the sheaf category for $L$ has enough points, and therefore has a theory of stalks. The same is true for finite products of intervals (Example \ref{ex 33}).

The description of stalks for sheaves of monomorphisms on an interval that arises from the general Boolean localization construction is fairly simple, and can be used as a starting point for a result (Lemma \ref{lem 32}) that expresses what stalks are supposed to do in this instance, while avoiding the abstract Boolean localization machinery.

The final part of Section 3 consists of a description of stalks for presheaves on an interval $L$. The construction of stalks for presheaves is a left Kan extension construction, by analogy with stalks for presheaves on a topological space or on the \'etale site of a scheme.
\medskip

This paper was written to clear the air about the sheaf theoretic properties of fuzzy sets, and to set the stage for potential
applications of the local homotopy theory of simplicial presheaves. See the Healy-McInnes paper \cite{HMc-2018} for a recent discussion of applications of simplicial fuzzy sets in topological data analysis.

We see in Example \ref{ex 10} that the system of Vietoris-Rips complexes $s \mapsto V_{s}(X)$ that is associated to a data set $X \subset \mathbb{R}^{n}$ does form a simplicial fuzzy set, or a simplicial sheaf (of monomorphisms) on  the locale $[0,R]^{op}$, where $R$ is larger than all distances between points of $X$. But we also see in Example \ref{ex 34} that this simplicial sheaf on $[0,R]^{op}$ has a rather awkward collection of stalks, which includes the full data set $X$ sitting as a discrete simplicial set in the generic fibre. It follows, for example, that an inclusion $X \subset Y \subset \mathbb{R}^{n}$ induces a stalkwise weak equivalence of simplicial sheaves if and only if $X=Y$.

This is quite like the situation that was encountered in the first attempt to give a sheaf theoretic context for topological data analysis \cite{Ober}. The earlier paper uses a different topology on the underlying space of parameters, but produces essentially the same stalks and thus has the same problem with local weak equivalences that are too tightly defined to be useful. 

A better option might be to use the metric space $D(Z)$ (Hausdorff metric) of finite subsets of a fixed metric space $Z$ as the base topological object. This is the setting for modern stability results \cite{CM-2010B}, \cite{BlumLes}, which roughly assert that if two data sets $X,Y$ in $Z$ are close in the sense that there is a specific bound $r$ on their Hausdorff distance, then the associated persistence invariants (homotopy types, hence clusters and persistent homology) have an interleaving distance with a specific upper bound, usually $2r$. In particular, if $X$ and $Y$ are close in the Hausdorff metric, then the corresponding systems $\{V_{s}(X)\}$ and $\{V_{s}(Y)\}$ are tightly interleaved as homotopy types. This is a local principle, and the present aim is to globalize it. The challenge is to interpret homotopy interleaving in terms of a local homotopy theoretic structure associated to the space $D(Z)$.



\section{Fuzzy sets and sheaves}

This section gives a general introduction to Barr's theory of fuzzy sets over a locale, in modern language and with multiple examples. We prove Barr's Theorem that the category of fuzzy sets over a locale $L$ is equivalent to the category of sheaves of monomorphisms on a slightly augmented version $L_{+}$ of $L$ --- this is Theorem \ref{th 12} below.

Every closed interval $[0,R]$ is a locale, and we identify the Vietoris-Rips system $V(X)$ with a simplicial fuzzy set over $[0,R]^{op}$ ($R$ sufficiently large) in Example \ref{ex 10}, or equivalently with a simplicial sheaf of monomorphisms, via Barr's Theorem.

The section finishes with a discussion of completeness properties for the category of sheaves of monomorphisms on $L_{+}$ and hence of fuzzy sets over $L$.
\medskip

A {\it frame} $L$ is a complete lattice in which finite meets distribute over all joins. Examples include the poset $op\vert_{X}$ of open subsets of a topological space $X$.

According to this definition, $L$ has a terminal object
$1$ (empty meet) and an initial object $0$ (empty join) --- see \cite[p.471]{MM}.

The completeness assumption means that every set of elements $a_{i} \in L$ has a least upper bound $\vee a_{i}$. The set $a_{i}$ also has a greatest lower bound $\wedge a_{i}$, which is the least upper bound $\vee_{x \leq a_{i}}\ x$ of the elements $x$ which are smaller than all $a_{i}$.

A morphism $L_{1} \to L_{2}$ of frames is a poset morphism which preserves meets and joins, and hence preserves initial and terminal objects. The category of locales is the opposite of the frame category, and one uses the terms ``frame'' and ``locale'' interchangeably. 

\begin{example}
    The interval $[0,1]$ of real numbers $s$ with $0 \leq s \leq 1$, with the standard ordering, is a locale. 

    The poset $[0,1]$ imbeds in $op\vert_{[0,1]}$ by the assignment $a \mapsto [0,a)$. Then $s \wedge t$ corresponds to the interval
\begin{equation*}
      [0,s) \cap [0,t) =[0, s\wedge t),
\end{equation*}
so that $s \wedge t = \min \{s,t\}$. Similarly,
\begin{equation*}
  \cup_{i}\ [0,s_{i}) = [0,\vee_{i}\ s_{i}),
\end{equation*}
where $\vee_{i}\ s_{i}$ is the least upper bound of the numbers $s_{i}$. 
\end{example}

\begin{example}
  The interval $[0,1]$ with the opposite ordering $[0,1]^{op}$ is also a locale. Here, $s \leq t$ in $[0,1]^{op}$ if and only if $t \leq s$ in $[0,1]$.

  In this case, $[0,1]^{op}$ imbeds in $op\vert_{[0,1]}$ by the assignment $s \mapsto (s,1]$. Then $\vee_{i}\ s_{i}$ is the greatest lower bound of the $s_{i}$ and $s \wedge t = \max \{s,t\}$.
\end{example}

    Observe that the posets $[0,1]$ and $[0,1]^{op}$ both have infinite meets, given by greatest lower bound and least upper bound, respectively.

    \begin{example}
    The closed interval $[a,b]$ and its opposite $[a,b]^{op}$ are locales, and there are linear scaling isomorphisms $[a,b] \cong [0,1]$ and $[a,b]^{op} \cong [0,1]^{op}$.
    \end{example}

    \begin{example}
      Suppose that $L_{1}, \dots ,L_{k}$ are locales. Then the product poset
      \begin{equation*}
        L_{1} \times \dots \times L_{k}
      \end{equation*}
      is also a locale.
    \end{example}

Suppose that $L$ is a locale. Following  \cite{BarrF}, a  function $\psi: X \to L$ is a {\it fuzzy set} over $L$. These are the objects of a category $\Fuz(L)$, called the category of fuzzy sets over $L$.

Suppose that $\phi: Y \to L$ is another such function. A morphism $(f,h): \psi \to \phi$ of $\Fuz(L)$ consists of a function $f: X \to Y$ and a relation $h: \psi \leq \phi \cdot f$ of functions taking values in the poset $L$. The existence of the relation $h$ means precisely that $\psi(x) \leq \phi(f(x))$ in the poset $L$ for all $x \in X$.

There is a poset $L^{X}$ whose objects are the functions $\psi: X \to L$. If $\gamma: X \to L$ is another such function, then there is a relation $\psi \leq \gamma$ if $\psi(x) \leq \gamma(x)$ for all $x \in X$. Every function $f: X \to Y$ determines a restriction functor $L^{Y} \to L^{X}$, so that there is a contravariant functor $\mathbf{Set} \to \mathbf{cat}$ which is defined by associating the poset $L^{X}$ to the set $X$.

Following Quillen (see \cite{GJ2}, for example), a {\it homotopy} $h: \psi \to \phi\cdot f$ is a natural transformation between functors $L^{Y} \to L^{X}$, which in the case at hand is given by the relations $\psi(x) \leq \phi(f(x)), x \in X$.

From this point of view, a morphism of $\Fuz(L)$ is a morphism
\begin{equation*}
  (f,h) : \psi \to \phi
\end{equation*}
  in the Grothendieck construction associated to the diagram of restriction functors, and the fuzzy set category $\Fuz(L)$ is that Grothendieck construction.

\begin{example}
All commutative diagrams
\begin{equation*}
\xymatrix@C=10pt{
X \ar[rr]^{g} \ar[dr] && Y \ar[dl] \\
& L
}
\end{equation*}
correspond to morphisms $(g,1)$ of $\Fuz(L)$ with identity homotopies in $L^{X}$, but the full collection of fuzzy set morphisms $X \to Y$ is larger --- these are the homotopy commutative diagrams.
\end{example}

\begin{example}\label{ex 6}
  Suppose that a finite set $X \subset \mathbb{R}^{n}$ is a data set, and suppose that $X \xrightarrow{\cong} \mathbf{N}$ is a listing of the members of $X$, where $\mathbf{N} = \{0,1, \dots ,N\}$. Choose a number $R$ such that $d(x,y) < R$ for all $x,y \in X$.

  Here, $d(x,y)$ is the distance between the points $x$ and $y$ in $\mathbb{R}^{n}$.

Let $\sigma$ be an ordered set of points $\sigma = \{x_{0},x_{1}, \dots ,x_{k}\}$ in $X$. Write
\begin{equation*}
  \phi(\sigma) = \max_{i,j}\ d(x_{i},x_{j})
\end{equation*}
Suppose that $\theta: \mathbf{r} \to \mathbf{k}$ is an ordinal number map.
then $\phi(\theta^{\ast}(\sigma)) \leq \phi(\sigma)$, with equality if $\theta$ is surjective, or if $\theta^{\ast}(\sigma)$ is a degeneracy of $\sigma$. Further, $\phi(\sigma)=0$ if and only if $\sigma$ is a degeneracy of a vertex.

The assignment $\sigma \mapsto \phi(\sigma)$ defines a function
\begin{equation*}
  \phi: \Delta^{N}_{k} \to [0,R].
\end{equation*}
on the set $\Delta^{N}_{k}$ of $k$-simplices of the simplicial set $\Delta^{N}$.

If $\theta: \mathbf{r} \to \mathbf{k}$ is an ordinal number map, then the relation  $\phi(\theta^{\ast}(\sigma)) \leq \phi(\sigma)$ defines a homotopy commutative diagram
\begin{equation}\label{eq 1}
  \xymatrix@C=10pt{
    \Delta^{N}_{k} \ar[rr]^{\theta^{\ast}} \ar[dr]_{\phi}  && \Delta^{N}_{r} \ar[dl]^{\phi} \\
        &  [0,R]^{op} 
            }
\end{equation}
or equivalently a morphism of fuzzy sets with values in the locale $[0,R]^{op}$.

The ordering $X \cong \mathbf{N}$ on the elements of the data set $X$ and the ambient distance function on $\mathbb{R}^{n}$ combine to give the simplicial set $\Delta^{N}$ the structure of a simplicial fuzzy set $\phi: \Delta^{N} \to [0,R]^{op}$, with coefficients in the locale $L = [0,R]^{op}$.

We shall write $\Delta^{X}$ for $\Delta^{N}$ henceforth to suppress notational dependence on the ordering $N \cong X$. The homotopy types of the spaces $V_{s}(X)$ are independent of the ordering on $X$ in any case. The simplicial fuzzy set associated to a data set $X$ therefore has the form $\phi: \Delta^{X} \to [0,R]^{op}$.
\end{example}

A {\it simplicial fuzzy set} $Z$ is a simplicial object in $\Fuz(L)$, meaning a contravariant functor $Z: \mathbf{\Delta}^{op} \to \Fuz(L)$ on the category of finite ordinal numbers. This usage is standard: a {\it simplicial object} in a category $\mathcal{A}$ is a functor $\mathbf{\Delta}^{op} \to \mathcal{A}$. See \cite{GJ2}.

The first appearance of simplicial fuzzy sets in the literature may be in Spivak's preprint \cite{fuzzy-Spivak} of 2009, where these objects are called fuzzy simplicial sets. The explicit interpretation of the Vietoris-Rips filtration as a simplicial fuzzy set that is presented in Example \ref{ex 6} seems to be new, but see the Healy-McInnes paper \cite{HMc-2018}.
\medskip  

Suppose that $L$ is a locale. Then $L_{+} = L \sqcup \{0 \}$ is also a locale, where $0$ is a new initial element. 

\begin{remark}
If $L=[0,R]^{op}$ then the object $R$ is no longer initial in $L_{+}$. I normally write $i$ for the number $R$ (the original initial object of $[0,R]^{op}$) to distinguish this element from the initial object $0$ of $L_{+}$. Clearly, $0 < i$ in $L_{+}$.
\end{remark}

Any locale $L$ has a Grothendieck topology, for which the {\it covering families} of $a \in L$ are sets of objects $b_{i} \leq a$ such that $\vee_{i} b_{i} = a$. This relation is equivalent to the assertion that $a$ is the least upper bound in $L$ for all elements $b_{i}$.

Given a family of elements $b_{i} \leq a$, the associated {\it sieve} $R$ is the set of all elements $s$ such that $s \leq b_{i}$ for some $i$. The sieve $R$ is {\it covering} if $\{ b_{i} \}$ is a covering family.

Equivalently, an arbitrary sieve $R$, i.e. a subset of the collection of elements $s \leq a$ which is closed under taking subobjects, is {\it covering} if $\vee_{s \in R}\ s = a$.

Since $L$ has a Grothendieck topology, it has associated categories $\Pre(L)$ and $\Shv(L)$ of presheaves and sheaves on $L$, respectively.

A {\it presheaf} is a functor $F: L^{op} \to \mathbf{Set}$, and a morphism of presheaves is a natural transformation. 

One says that the presheaf $F$ is a {\it sheaf}
if the map
\begin{equation*}
F(a) \to \varprojlim_{b \in R}\ F(b)
\end{equation*}
is an isomorphism for all covering sieves $R$ of all objects $a$. This is equivalent to requiring that the diagram
\begin{equation*}
F(a) \to \prod_{i}\ F(b_{i}) \rightrightarrows \prod_{i,j}\ F(b_{i} \wedge b_{j})
\end{equation*}
is an equalizer for all covering families $\{ b_{i} \}$ of all objects $a$. In other words, $F(a)$ should be recovered from the values of $F(b_{i})$ by patching, for all coverings $\{ b_{i} \}$ of $a$.

\begin{remark}
  If $0$ is an initial object of $L$ and $F$ is a sheaf on $L$, then $F(0)$ must be the one-point set. One writes $F(0) = \ast$ to express this.

  In effect, the empty sieve $\emptyset \subset \hom(\ ,0)$ is covering, because $0$ is an empty join. It follows that there is an isomorphism
  \begin{equation*}
    F(0) \cong \hom(\hom(\ ,0),F) \cong \hom(\emptyset,F) = \ast
  \end{equation*}
  for any sheaf $F$.
  Compare with \cite[p.35]{LocHom}.
\end{remark}

We are therefore entitled to categories $\Pre(L_{+})$ and $\Shv(L_{+})$ of presheaves and sheaves, respectively for the locale $L_{+}$, and these are the examples that we will focus on.

Write $\mathbf{Mon}(L_{+})$ for the full subcategory of the sheaf category $\Shv(L_{+})$, whose objects are the sheaves $F$ such that all restriction maps $F(b) \to F(a)$ associated to relations $a \leq b$ in $L$ are monomorphisms.
The requirement that the relation $a \leq b$ is in $L$ is important, because $F(0) = \ast$.

Barr constructs a functor \cite{BarrF}
\begin{equation*}
T: \Fuz(L) \to \Mon(L_{+})
\end{equation*}
which defines an equivalence of categories. The existence of this equivalence of categories is the main result of \cite{BarrF} , and it appears as Theorem \ref{th 12} below.

Explicitly, define
\begin{equation*}
L_{\geq a} = \{\ x \in L\ \vert\ x \geq a\ \}.
\end{equation*}
If $\psi: X \to L$ is a member of $\Fuz(L)$, define a presheaf $T(\psi)$ by
\begin{equation*}
T(\psi)(a) = \psi^{-1}(L_{\geq a}).
\end{equation*}
for $a \in L$, and set $T(\psi)(0) = \ast$.
Then the assignment $a \mapsto T(\psi)(a)$ defines a presheaf on $L_{+}$ such that every relation $s \leq t$ induces a monomorphism $T(\psi)(t) \to T(\psi)(s)$. The presheaf $T(\psi)$ is a sheaf because $x \in T(\psi)(a)$ if and only if $x \in T(\psi(b_{i}))$ for any covering family $\{ b_{i} \}$ of $a$.

If $(f,h): \psi \to \phi$ is a morphism of fuzzy sets (as above), then the relations $\psi(x) \leq \phi(f(x))$ in $L$ (i.e. the homotopy $h$) imply that if $\psi(x) \in L_{\geq a}$ then $\phi(f(x)) \in L_{\geq a}$, and so the function $f$ restricts to functions
\begin{equation*}
  f: \psi^{-1}L_{\geq a} \to \phi^{-1}L_{\geq a}
\end{equation*}
  that are natural in $a$, so that we have a sheaf homomorphism
\begin{equation*}
  f_{\ast}: T(\psi) \to T(\phi).
\end{equation*}

\begin{remark}
$T(X)$ is sometimes called the {\it level cut} description of the fuzzy set $\psi: X \to L$ --- see \cite{DuPr}. 
\end{remark}

\begin{example}\label{ex 10}
  Suppose that the finite set $X \subset \mathbb{R}^{n}$ is a data set, with ordering $X \xrightarrow{\cong} \mathbb{N}$ as in Example \ref{ex 6}. Again, choose $R > d(x,y)$ for all pairs of points $x,y \in X$.

  Recall that the simplicial fuzzy set $\phi: \Delta^{X}_{k} \to [0,R]^{op}$ is defined for a simplex $\sigma = \{ x_{0},x_{1} , \dots ,x_{k} \}$ by
    \begin{equation*}
      \phi(\sigma) = \max_{i,j}\ d(x_{i},x_{j}).
    \end{equation*}
    Then, for $s \in [0,R]$,
      \begin{equation*}
        T(\phi)_{s} = \phi^{-1}[0,s],
      \end{equation*}
      which is the set of $k$-simplices $\sigma = \{ x_{0}, \dots ,x_{k} \}$ of $\Delta^{X}$ such that $d(x_{i},x_{j}) \leq s$. It follows that
$
        T(\phi)_{s} = V_{s}(X)_{k}
$
      is the set of $k$-simplices of the Vietoris-Rips complex $V_{s}(X)$ for the data set $X$.

      The Vietoris-Rips complex functor $s \mapsto V_{s}(X)$ is the simplicial sheaf that Barr's construction associates to the simplicial fuzzy set $\phi: \Delta^{X} \to [0,R]^{op}$.
\end{example}

There is an isomorphism 
\begin{equation}\label{eq 2}
\varinjlim_{a \in L}\ F(a) \xrightarrow{\cong} F(i),
\end{equation}
for any sheaf $F \in \Mon(L_{+})$, since $i$ is initial in $L$. This colimit is filtered, and the canonical maps $F(a) \to F(i)$ are monomorphisms. 

The set of sections $F(i)$ is the {\it generic fibre} of the object $F$.

\begin{lemma}\label{lem 11}
Suppose that $F$ is a sheaf of monomorphisms on $L_{+}$ and that $x \in F(i)$. Then there is a unique maximum element $s_{x}$ such that $x \in F(s_{x})$.
\end{lemma}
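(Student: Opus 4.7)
The plan is to define
\begin{equation*}
S_x := \{\, a \in L : x \text{ lies in the image of } F(a) \hookrightarrow F(i)\,\}
\end{equation*}
and take $s_x := \bigvee_{a \in S_x} a$. The isomorphism (\ref{eq 2}) exhibits $F(i)$ as the filtered colimit of the $F(a)$, so $S_x$ is nonempty, and completeness of $L$ guarantees that $s_x$ lies in $L$. By construction, $\{a\}_{a \in S_x}$ is a covering family of $s_x$ in the Grothendieck topology of $L_+$, so the strategy is to use the sheaf condition for $F$ to lift $x$ to a section over $s_x$, after which the maximality of $s_x$ will be immediate from its definition.

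The first concrete step is, for each $a \in S_x$, to invoke injectivity of $F(a) \hookrightarrow F(i)$ to obtain a unique $x_a \in F(a)$ mapping to $x$. I would then check that the family $(x_a)_{a \in S_x}$ is compatible for this cover: for $a, b \in S_x$, the meet $a \wedge b$ lies in $L$ (since $L$ is a complete lattice), so $F(a \wedge b) \hookrightarrow F(i)$ is again a monomorphism, and the two restrictions of $x_a$ and $x_b$ to $F(a \wedge b)$ both map to $x$ in $F(i)$; injectivity forces them to coincide. Applying the sheaf condition to the covering family produces a unique $\tilde x \in F(s_x)$ restricting to each $x_a$, and since the composite $F(s_x) \hookrightarrow F(a) \hookrightarrow F(i)$ sends $\tilde x$ to $x$, we conclude $s_x \in S_x$. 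As $s_x$ is by definition an upper bound for $S_x$, it is the maximum, and uniqueness of a maximum in a poset is automatic.

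I do not anticipate a serious obstacle: the monomorphism hypothesis on $F$ converts what is normally the delicate compatibility check on overlaps into a trivial equality in the generic fibre, and the completeness of $L$ supplies the candidate $s_x$ for free. The closest thing to a subtlety is keeping straight the interplay between $L$ and $L_+$, namely verifying that joins and binary meets of elements of $L$ computed in $L_+$ still land in $L$, which holds because $L$ is closed under both operations inside $L_+$.
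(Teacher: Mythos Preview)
Your proposal is correct and follows exactly the paper's approach: define $s_x$ as the join of all $c$ with $x \in F(c)$, observe these $c$ cover $s_x$, and invoke the sheaf condition to conclude $x \in F(s_x)$. The paper compresses this into two sentences, leaving the compatibility check implicit; you have spelled out precisely that step (using injectivity of $F(a \wedge b) \hookrightarrow F(i)$), which is the only content hidden in the paper's phrase ``and so $x \in F(s_x)$.''
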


\begin{proof}
Consider all $c$ in $L$ such that $x \in F(c)$, and let 
\begin{equation*}
s_{x} = \vee_{x \in F(c)}\ c.
\end{equation*}
Then $s_{x}$ is covered by the elements $c$, and so $x \in F(s_{x})$.
\end{proof}

Suppose again that $F \in \Mon(L_{+})$. 
By Lemma \ref{lem 11}, for each $x \in F(i)$, there is a unique maximum $s_{x}$ such that $x \in F(s_{x})$. Define $\psi(F): F(i) \to L$ by setting $\psi_{F}(x) = s_{x}$. Then we have a function
\begin{equation*}
\psi(F): F(i) \to L, 
\end{equation*}
which is a fuzzy set. 

To put it a slightly different way, the fuzzy set $\psi_{F}: F(i) \to L$ is defined by 
\begin{equation*}
\psi_{F}(x) = sup\ \{ b\ \vert\ x \in F(b)\ \}
\end{equation*}
for $x \in F(i)$, and $F \in \Mon(L_{+})$.

\begin{theorem}[Barr]\label{th 12}
  The assignments $\psi$ and $\phi$ define an equivalence of categories
  \begin{equation*}
    \psi: \Mon(L_{+}) \overset{\simeq}{\leftrightarrows} \Fuz(L): T
  \end{equation*}
\end{theorem}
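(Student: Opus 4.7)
The plan is to exhibit mutually inverse natural isomorphisms $\psi \circ T \cong \mathrm{id}_{\Fuz(L)}$ and $T \circ \psi \cong \mathrm{id}_{\Mon(L_{+})}$ by direct computation, and then verify compatibility with morphisms. Both assignments are functorial and their images lie in the relevant categories (the sheaf condition for $T(\psi)$ and the fact that each restriction map is a monomorphism are already recorded in the excerpt), so the remaining work is to identify the round trips.

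First I would compute $\psi_{T(\psi)}$ for a fuzzy set $\psi: X \to L$. Because $i$ is initial in $L$ one has $L_{\geq i} = L$ and so $T(\psi)(i) = \psi^{-1}(L) = X$. For $x \in X$ the element $x$ lies in $T(\psi)(b) = \psi^{-1}(L_{\geq b})$ precisely when $\psi(x) \geq b$, so
\begin{equation*}
\psi_{T(\psi)}(x) = \sup\{\, b \in L \mid x \in T(\psi)(b)\,\} = \sup\{\, b \in L \mid b \leq \psi(x)\,\} = \psi(x).
\end{equation*}
Thus $\psi_{T(\psi)} = \psi$ on the nose, and this identification is plainly natural in $\psi$.

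Second I would verify that $T(\psi_{F}) \cong F$ for $F \in \Mon(L_{+})$. The filtered colimit (\ref{eq 2}), together with the fact that all restrictions $F(b) \to F(a)$ in $L$ are monomorphisms, realises each $F(a)$ as a subset of the generic fibre $F(i)$. Unwinding the definition,
\begin{equation*}
T(\psi_{F})(a) = \psi_{F}^{-1}(L_{\geq a}) = \{\, x \in F(i) \mid s_{x} \geq a\,\},
\end{equation*}
and by Lemma \ref{lem 11} the condition $s_{x} \geq a$ is equivalent to $x \in F(a)$ (viewed as a subset of $F(i)$). Thus the subsets $T(\psi_{F})(a)$ and $F(a)$ of $F(i)$ coincide, which gives an isomorphism of sheaves on $L$; on the added object $0$ both sides are the one-point set, so the isomorphism extends over $L_{+}$ and is natural in $F$.

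Finally I would check that morphisms correspond. Given $(f,h): \psi \to \phi$ in $\Fuz(L)$, the relations $\psi(x) \leq \phi(f(x))$ are exactly what is needed for $f$ to restrict to maps $\psi^{-1}(L_{\geq a}) \to \phi^{-1}(L_{\geq a})$ natural in $a$, so $T(f,h)$ is well defined; conversely, a sheaf map $g: F \to G$ in $\Mon(L_{+})$ sends $F(s_{x}) \to G(s_{x})$, forcing $\psi_{G}(g(x)) \geq s_{x} = \psi_{F}(x)$, and thus producing a morphism of fuzzy sets from $g$ on generic fibres together with this inequality as the homotopy. Under the identifications above these two constructions are inverse to each other on morphisms. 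The only really delicate point — and therefore the main obstacle — is the bookkeeping that turns the abstract filtered colimit $F(i) = \varinjlim_{a} F(a)$ into a genuine union of subsets, so that one may speak literally of ``$x \in F(a)$'' and recover $s_{x}$ as the join of those $a$; once this is pinned down using the monomorphism hypothesis, the rest reduces to the two displayed computations above.
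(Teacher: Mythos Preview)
Your proposal is correct and follows essentially the same route as the paper: both arguments verify the two round-trip identities $\psi_{T(\psi)} = \psi$ and $T(\psi_{F}) \cong F$ by direct computation, identifying $F(a)$ with $\psi_{F}^{-1}(L_{\geq a})$ as subsets of the generic fibre $F(i)$ via Lemma~\ref{lem 11}. Your write-up is somewhat more explicit than the paper's, in that you spell out the correspondence on morphisms and flag the bookkeeping needed to treat each $F(a)$ literally as a subset of $F(i)$; the paper takes both of these points for granted.
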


\begin{proof}
  Suppose that $F$ is a sheaf of monomorphisms, and that $b \in L$, and let $\psi_{F}: F(i) \to L$ be the corresponding fuzzy set. Then
\begin{equation*}
  F(b) = \psi(F)^{-1}(L_{\geq b})
\end{equation*}
as subsets of $F(i)$, so that there is a natural sheaf isomorphism
  \begin{equation*}
    F \xrightarrow{\cong} T(\psi(F)).
    \end{equation*}
    
Suppose that $f: X \to L$ is a fuzzy set. Then $\psi(f)(i) = f^{-1}(L) = X$ and $x \in f^{-1}(L_{\geq f(x)})$. If $x \in f^{-1}(L_{\geq b})$ for some $b$, then $f(x) \geq b$. It follows that $f(x) = \psi(T(f))(x)$.
  \end{proof}

\begin{example}
The representable functor $\hom(\ ,s)$ on $L_{+}$ has the form
\begin{equation*}
\hom(\ ,s)(t) = 
\begin{cases}
\ast & \text{if $t \leq s$, and} \\
\emptyset & \text{otherwise}.
\end{cases}
\end{equation*}
Here, $\ast$ is the one-point set.

This presheaf is a sheaf, so the topology on $L_{+}$ is sub-canonical.
The sheaf $\hom(\ ,s)$ is a sheaf of monomorphisms. The corresponding fuzzy set, for $s \in L$, is the function $s:\ast \to L$ which picks out the element $s \in L$.

The constant presheaf $\ast$ is defined to be a one-point set $\ast(a)$ for all $a \in L_{+}$, with identity maps associated to all relations $a \leq b$. This presheaf is a sheaf, and is a member of $\Mon(L_{+})$. This sheaf is represented by the terminal object $t \in L$, and so the corresponding fuzzy set is the function $t: \ast \to L$.

If $s \leq t$ in $L$, then the induced sheaf map $\hom(\ ,s) \to \hom(\ ,t)$ corresponds to the fuzzy set map from $s: \ast \to L$ to $t: \ast \to L$ which is given by the identity function on $\ast$ and the relation $s \leq t$.
\end{example}

\begin{example}
Suppose that $K$ is a simplicial set. The simplicial presheaf $L_{s}(K)$ that is defined by
\begin{equation*}
  L_{s}(k) = \hom(\ ,s) \times K
\end{equation*}
is a simplicial sheaf of monomorphisms, and therefore represents a simplicial fuzzy set. There is a natural isomorphism
\begin{equation*}
  \hom(L_{s}(K),X) \cong \hom(K,X(s))
\end{equation*}
for all simplicial sheaves (or presheaves) $X$ on $L_{+}$. See also
\cite[Sec. 2.3]{LocHom}.
\end{example}

\begin{lemma}\label{lem 15}
  The category $\mathbf{Mon}(L_{+})$ is complete. Limits are formed in the ambient sheaf category $\mathbf{Shv}(L_{+})$.
\end{lemma}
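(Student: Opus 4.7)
The plan is to verify two standard facts in sequence: first, that limits of sheaves on $L_+$ are computed sectionwise in the ambient presheaf category; second, that the sectionwise limit of a diagram of sheaves of monomorphisms is again a sheaf of monomorphisms. Both are routine, so this is mainly a matter of assembling ingredients.

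First I would start with a diagram $D \colon \mathcal{I} \to \mathbf{Mon}(L_+)$ and form its limit $F$ in $\mathbf{Shv}(L_+)$. Limits in a sheaf category agree with limits in the presheaf category, since the forgetful functor $\mathbf{Shv}(L_+) \to \mathbf{Pre}(L_+)$ is a right adjoint to sheafification and therefore preserves limits. Concretely, for each $a \in L_+$ one has
\begin{equation*}
F(a) = \varprojlim_{\alpha \in \mathcal{I}}\ D(\alpha)(a),
\end{equation*}
with restriction maps obtained by taking limits of restriction maps. One should also check, for completeness of exposition, that this sectionwise presheaf limit is already a sheaf: this follows because the sheaf condition is itself a limit condition, so it commutes with the limit over $\mathcal{I}$.

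Next I would verify that $F$ lies in $\mathbf{Mon}(L_+)$. Given a relation $a \leq b$ with $a,b \in L$, the map $F(b) \to F(a)$ is the limit in $\mathbf{Set}$ of the family of restriction maps $D(\alpha)(b) \to D(\alpha)(a)$, each of which is a monomorphism (i.e., an injection of sets) by hypothesis. Since monomorphisms are stable under limits in any category, and in $\mathbf{Set}$ this is immediate from the explicit description of a limit as a subset of a product, the induced map $F(b) \to F(a)$ is also injective. Hence $F \in \mathbf{Mon}(L_+)$.

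Finally, since $\mathbf{Mon}(L_+)$ is a full subcategory of $\mathbf{Shv}(L_+)$ containing $F$ and the cone legs $F \to D(\alpha)$, the universal property of $F$ as a limit in $\mathbf{Shv}(L_+)$ immediately restricts to the universal property in $\mathbf{Mon}(L_+)$. There is no real obstacle here; the only point requiring a moment's care is the observation that the sheaf condition survives the sectionwise limit, and that monomorphisms are preserved under limits in $\mathbf{Set}$.
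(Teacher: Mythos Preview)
Your proof is correct and follows the same approach as the paper; the paper's proof consists of the single sentence ``This result follows from the fact that an inverse limit of monomorphisms is a monomorphism,'' which is exactly your second step, with your first and third steps left implicit as standard background. You have simply unpacked the details that the author assumed the reader would fill in.
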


\begin{proof}
  This result follows from the fact that an inverse limit of monomorphisms is a monomorphism.
  \end{proof}

\begin{example}\label{ex 16}
Form the pullback diagram
\begin{equation*}
\xymatrix{
Z \ar[r] \ar[d] & F \ar[d]^{q} \\
E \ar[r]_{p} & X
}
\end{equation*}
of sheaves on $L_{+}$, with $E,F,X$ all in $\Mon(L_{+})$. 

Take 
\begin{equation*}
(x,y) \in Z(i) = E(i) \times_{X(i)} F(i)
\end{equation*}
and suppose that $(x,y) \in Z(a)$.

Then $x \in E(a)$ and $y \in F(a)$ so that $a \leq \psi_{E}(x)$ and $a \leq \psi_{F}(y)$. It follows that $a \leq \psi_{E}(x) \wedge \psi_{F}(y)$.

On the other hand, if $b \leq  \psi_{E}(x) \wedge \psi_{F}(y)$, then there is a $v \in E(b)$ which restricts to $x$ and a $u \in F(b)$ which restricts to $y$. Also, $p(v)$ and $q(u)$ in $X(b)$ restrict to the same element of $X(i)$, so that $p(v) = q(u)$, in $X(b)$ and $(u,v) \in Z(b)$.

It follows that
\begin{equation*}
\psi_{Z}((x,y)) = \psi_{E}(x) \wedge \psi_{F}(y)
\end{equation*}
for all $(x,y) \in Z(i)$.

Another way of saying this is to assert that $\psi_{Z}((x,y))$ is the greatest lower bound of $\psi_{E}(x)$ and $\psi_{E}(y)$.
\end{example}

\begin{example}
Suppose that $X: J \to \Mon(L_{+})$ is a small diagram.  For a fixed object $a \in L_{+}$, the $a$-sections of $Z=\varprojlim_{j}\ X(j)$ are the $J$-compatible families $\{ x_{j} \}$ of elements in the various sets $X(j)(a)$.

One can use the methods of the pullback case in Example \ref{ex 16} to show that
$\psi_{Z}(\{ x_{j} \})$ is the greatest lower bound in $L_{+}$ of the elements $\psi_{X(j)}(x_{j})$.
\end{example}

\section{Presheaves of monomorphisms}

Suppose that $L$ is a locale. This section introduces the theory of presheaves of monomorphisms on the augmented locale $L_{+}$.

If $L$ is an interval in a suitable sense, then all presheaves of monomorphisms $F$ on $L_{+}$ are separated, and the category of coverings for $s \in L$ has a particularly simple form (Lemma \ref{lem 18}). The associated sheaf for $F$ is a sheaf of monomorphisms in this case (Lemma \ref{lem 22}).

Any presheaf on $L_{+}$ has an associated presheaf of monomorphisms $\Im(F)$, which is defined by taking images in a generic fibre (Lemma \ref{lem 26}). This result immediately yields a colimit construction for sheaves of monomorphisms: take the usual presheaf-theoretic colimit, apply the image functor, and sheafify (Lemma \ref{lem 28}). Examples of colimit constructions for fuzzy sets are discussed at the end of the section.
\medskip

We now consider presheaves $F: (L_{+})^{op} \to \mathbf{Set}$ such that $F(0)=\ast$ and all morphisms $a \leq b$ of $L$ induce monomorphisms $F(b) \to F(a)$. Such a presheaf is called a {\it presheaf of monomorphisms}.

Write $\Mon_{p}(L_{+})$ for the category of presheaves of this form. 
\medskip

Most of the results of this section depend on the assumption that
the locale $L$  is an {\it interval} in the sense that
\begin{itemize}
\item[1)] $L$ has a total ordering, and
\item[2)] the ordering is {\it dense}, meaning that if $a < b$ in $L$, there is an $s \in L$ such that $a < s < b$.
\end{itemize}

The locales of immediate practical interest, such a closed interval $[c,d] \subset \mathbb{R}$ and its opposite, are intervals in this sense.

\begin{lemma}\label{lem 18} 
Suppose that the locale $L$ is totally ordered.
Then the covering sieves for $a \in L$ are defined by the families of all $b$ such that $b < a$ or such that $b \leq a$. 
\end{lemma}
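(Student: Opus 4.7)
The plan is to unpack the definition of a covering sieve in this totally ordered setting and do a case split on whether $a$ itself belongs to the sieve. Recall from earlier in the section that a sieve $R$ on $a$ is a downward-closed subset of $\{b \in L_{+} : b \leq a\}$, and it is covering precisely when $\vee_{s \in R} s = a$. The total ordering on $L$ rigidifies downward-closed subsets enough that only two such sieves can satisfy the covering condition.

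The first thing I would do is observe the dichotomy: since $L$ is totally ordered, every $s$ with $s \leq a$ satisfies either $s = a$ or $s < a$, so either $a \in R$ or else $R \subseteq \{b : b < a\}$. In the first case, downward closure immediately gives $R = \{b : b \leq a\}$, which is obviously covering. The substantive case is the second one: $a \notin R$, so $R \subseteq \{b : b < a\}$, and I need to argue $R$ is in fact all of $\{b : b < a\}$. For this I would pick an arbitrary $c < a$ and note that $c$ cannot be an upper bound for $R$, because the covering hypothesis says $\vee R = a$, which strictly exceeds $c$. Hence there is some $s \in R$ with $c < s$, and downward closure of the sieve forces $c \in R$. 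This gives $R = \{b : b < a\}$, as required.

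Conversely, I would point out that both candidate sieves $\{b : b \leq a\}$ and $\{b : b < a\}$ are bona fide sieves (downward closed under $\leq$ by construction); the first is automatically covering because it contains $a$, while the second is covering precisely when $\vee_{b < a} b = a$ holds in $L$. The lemma therefore describes the possible shapes of a covering sieve without asserting that the second shape is always realised.

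There is essentially no obstacle here: the whole argument is a direct deduction from total ordering plus the sieve and covering definitions. The only subtle point worth being careful about in the writeup is not to overclaim; in particular I would avoid suggesting that $\{b : b < a\}$ always covers, since this is an extra condition (typically guaranteed by density, which is invoked elsewhere in the section but not in this lemma).
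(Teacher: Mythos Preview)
Your proof is correct and follows essentially the same route as the paper's: a case split on whether $a \in R$, followed by the observation that if $c < a$ failed to lie in $R$ then total ordering would make $c$ an upper bound for $R$, contradicting $\vee R = a$. The paper phrases this via a generating family $\{b_{i}\}$ rather than working with $R$ directly, and defers your careful caveat (that $\{b : b < a\}$ need not always cover) to the Remark immediately following the lemma, but the substance is identical.
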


\begin{proof}
Suppose that a covering sieve $R \subset \hom(\ ,a)$ is generated by a set of elements $b_{i}$, so that $a = \vee_{i}\ b_{i}$. Suppose that $R \ne \hom(\ ,a)$.

Suppose that $c < a$. If $c$ is not bounded above by some $b_{i}$ then $b_{i} < c$ for all $i$ since $L$ is totally ordered, so that 
\begin{equation*}
a=\vee_{i}\ b_{i} \leq c < a,
\end{equation*}
and we have a contradiction. It follows that $c \leq b_{i}$ for some $i$, and so the relation $c < a$ is in $R$. 
\end{proof}

\begin{remark}
The collection of all $b$ such that $b \leq a$ is the trivial covering sieve for $a$, because it includes the identity relation on the object $a$. Lemma \ref{lem 18} says that an element $a \in L$ has at most two covering sieves if $L$ is totally ordered.

In order to be assured that $a \in L$ has a non-trivial covering, or that the elements $b < a$ cover $a$, we also need to know that $L$ satisfies condition 2) above, so that $L$ is an interval.
\end{remark}

\begin{example}
  The total ordering on $L$ is necessary for the conclusion of Lemma \ref{lem 18}.
  
The elements $(1,0)$ and $(0,1)$ define a covering of $(1,1)$ in $[0,1]^{\times 2}$, and the element $(\frac{1}{2},\frac{1}{2})$ is not bounded above by either $(1,0)$ or $(0,1)$.
\end{example}

{\it We shall assume that the locale $L$ is an interval for the rest of this section.}
\medskip

It follows from Lemma \ref{lem 18} that a presheaf $F$ on $L_{+}$ is a sheaf if and only if $F(0) = \ast$ and the map
\begin{equation*}
\eta: F(a) \to \varprojlim_{0< b < a}\ F(b) =: LF(a)
\end{equation*}
is an isomorphism for all $a \in L$ with $a$ not initial. There is no condition on $F(i)$ for the initial object $i$ of $L$ is initial, and $LF(i) = F(i)$. 

The assignment $a \mapsto LF(a)$ defines a presheaf $LF$ on $L_{+}$. Because $L$ has a total ordering and there are so few covering sieves for elements of $L$, the presheaf $LF$ is the universal separated presheaf associated to $F$ \cite[Lem 3.13]{LocHom}. 

In general, there is a canonical natural map $\eta: E \to LE$ for all presheaves $E$, and $LE$ is a sheaf if $E$ is separated. A presheaf $E$ is {\it separated} if the map $\eta: E \to LE$ is a sectionwise monomorphism. 

\begin{corollary}\label{cor 21}
If $F \in \Mon_{p}(L_{+})$, then the map $\eta$ is a sectionwise monomorphism, so that $F$ is a separated presheaf and $LF$ is its associated sheaf.
\end{corollary}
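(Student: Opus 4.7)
The plan is to verify sectionwise that $\eta_a : F(a) \to LF(a)$ is injective by reducing to a single restriction map of $F$ that is already known to be a monomorphism, and then to promote $LF$ from a separated reflection to the associated sheaf by invoking the facts about the functor $L$ recorded in the paragraph immediately preceding the corollary.

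First I would dispose of the two degenerate values of $a \in L_+$. For $a = 0$ the set $F(0)$ is a singleton, so $\eta_0$ is injective for trivial reasons. For $a = i$ the author has already stipulated $LF(i) = F(i)$ (there is no non-trivial covering of the initial object of $L$ by Lemma \ref{lem 18}), so $\eta_i$ is the identity map.

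The substantive case is $a \in L$ with $a \ne i$. Here I would exploit the total ordering on $L$: since $i$ is initial and $a \ne i$, we have $i < a$ in $L$, hence $0 < i < a$ in $L_+$, so $b = i$ is an object of the index category defining $LF(a) = \varprojlim_{0 < b < a} F(b)$. The canonical projection $LF(a) \to F(i)$ pre-composed with $\eta_a$ is by construction the restriction map $F(a) \to F(i)$, and this restriction is a monomorphism because $F \in \Mon_p(L_+)$ and $i \le a$ in $L$. Injectivity of $\eta_a$ follows at once.

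With $\eta$ a sectionwise monomorphism, $F$ is separated by definition, and the fact recalled just above the corollary — that $LE$ is a sheaf whenever $E$ is separated — gives that $LF$ is a sheaf; its universal property as the separated reflection then identifies it with the associated sheaf of $F$. The main point, rather than an obstacle, is conceptual: the interval hypothesis eliminates the usual need for iterating the plus construction, so a single application of $L$ already sheafifies any presheaf of monomorphisms on $L_+$.
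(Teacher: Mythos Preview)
Your argument is correct and is exactly the kind of unpacking the paper leaves implicit, since Corollary~\ref{cor 21} is stated without proof. The one minor observation is that your choice $b=i$ is more specific than needed: for any $b$ with $0<b<a$ the composite of $\eta_a$ with the projection $LF(a)\to F(b)$ is the restriction $F(a)\to F(b)$, already a monomorphism by hypothesis, so any index in the limit diagram witnesses injectivity. Your use of the generic fibre is perfectly valid and in keeping with the paper's emphasis on $F(i)$, but the argument does not actually require going all the way down.
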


\begin{lemma}\label{lem 22}
If $F \in \Mon_{p}(L_{+})$, then $LF \in \Mon(L_{+})$. In particular, the associated sheaf functor 
\begin{equation*}
L=L^{2}: \Pre(L_{+}) \to \Shv(L_{+})
\end{equation*}
restricts to a functor $\Mon_{p}(L_{+}) \to \Mon(L_{+})$.
\end{lemma}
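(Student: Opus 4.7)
The plan is to reduce the full monomorphism condition on $LF$ to a single injectivity check: for every $b \in L$, the restriction map $LF(b) \to LF(i) = F(i)$ is injective. Granting this, for any relation $a \leq b$ in $L$ the composite $LF(b) \to LF(a) \to LF(i)$ equals $LF(b) \to LF(i)$ by functoriality of $LF$, so injectivity of the composite forces injectivity of the first factor $LF(b) \to LF(a)$; that is precisely the defining condition for $LF \in \Mon(L_{+})$.

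For the injectivity step I would invoke Lemma \ref{lem 18} to write $LF(b) = \varprojlim_{0 < c < b} F(c)$ whenever $b \neq i$, so that the initial object $i$ lies in the index set and the restriction $LF(b) \to F(i)$ is the projection onto the $c = i$ component. Given two coherent families $(x_c), (x'_c) \in LF(b)$ with $x_i = x'_i$, I would argue that for every $c$ with $i < c < b$ coherence forces both $x_c$ and $x'_c$ to restrict to $x_i$ under the structure map $F(c) \to F(i)$; since $F \in \Mon_{p}(L_{+})$ this map is a monomorphism, so $x_c = x'_c$ and the two families agree componentwise. The case $b = i$ is trivial because $LF(i) = F(i)$ by definition.

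The second assertion is then immediate: by Corollary \ref{cor 21} any $F \in \Mon_{p}(L_{+})$ is already separated, so its associated sheaf is $L^{2} F = LF$, which the previous paragraph places in $\Mon(L_{+})$. The step I expect to require the most care is confirming that the projection $LF(b) \to F(i)$ really agrees with the sheaf-theoretic restriction of the presheaf $LF$ induced by the relation $i \leq b$ in $L$; this is forced by the universal property of the inverse limit together with naturality of the unit $\eta: F \to LF$, but it is the place where the definition $LF(i) := F(i)$ genuinely enters the bookkeeping, rather than arising automatically from a limit formula.
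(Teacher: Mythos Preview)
Your argument is correct and is essentially the paper's proof, reorganized around the generic fibre: the paper checks directly that $LF(c)\to LF(b)$ is injective by restricting compatible families to some $t<b$ and invoking the monomorphism $F(s)\to F(t)$, whereas you take $t=i$ once and for all and then use the factorization $LF(b)\to LF(a)\to LF(i)$ to deduce injectivity for arbitrary $a\leq b$. The bookkeeping point you flag---that the presheaf restriction $LF(b)\to LF(i)=F(i)$ is the projection onto the $c=i$ component---is indeed the only place requiring care, and your justification via naturality of $\eta$ is the right one.
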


\begin{proof}
Suppose that $b \leq c$ in $L$. We show the restriction map
\begin{equation*}
LF(c) \to LF(b)
\end{equation*}
is a monomorphism.

Given compatible families $\{ x_{s} \}$ and $\{ y_{s} \}$ for $s < c$, if $x_{s} = y_{s}$ for $s < b$, then $x_{s}$ and $y_{s}$ have the same image in $F(t)$ for some $t<b$, and so $x_{s}=y_{s}$.
\end{proof}

\begin{example}
Suppose that $L=[0,1]$, let $A$ be a pointed set with base point $\ast$. Define a presheaf $F_{A}: (L_{+})^{op} \to \mathbf{Set}$ by
\begin{equation*}
F_{A}(s) =
\begin{cases}
\ast & \text{if $s=1$, and}\\
A & \text{if $s < 1$ in $[0,1]$}.
\end{cases}
\end{equation*}
Set $F_{A}(0) = \ast$, where $0$ is the new initial object of $[0,1]^{+}$.

If $s < 1$ the induced map $F_{A}(1) \to F_{A}(s)$ is the inclusion of the base point of $A$, and if $s \leq t < 1$ in $L$ then $F_{A}(t) \to F_{A}(s)$ is the identity on $A$. Then $\varprojlim_{s <1}\ F_{A}(s)$ is the set $A$ and not the base point $\ast$ in general, so that $F_{A}$ is a presheaf of monomorphisms, and is not a sheaf. 
\end{example}

\begin{example}
Suppose that $F_{i},\ i \in I$ is a list of objects in $\Mon_{p}(L_{+})$. Then the disjoint union $\sqcup_{i}\ F_{i}$ is in $\Mon_{p}(L_{+})$. Note that we must set $(\sqcup_{i}\ F_{i})(0) = \ast$ for this to work.
\end{example}

\begin{example}
Suppose that $A_{i} \subset F$ are subobjects of a fixed object $F \in \Mon_{p}(L_{+})$, so that all $A_{i}$ are in $\Mon_{p}(L_{+})$. Then the (sectionwise) union 
$\cup_{i}\ A_{i}$ is a subobject of $F$, and is also in $\Mon_{p}(L_{+})$.

It follows that the category $\Sub(F)$ of subobjects of an object $F \in \Mon_{p}(L_{+})$ is a locale.
\end{example}

Suppose that $E$ is a presheaf on $L_{+}$.
The epi-monic factorizations of the maps $E(s) \to E(i)$ for $s \in L$ determine subobjects $\Im(E)(s) \subset E(i)$ with commutative diagrams
\begin{equation*}
\xymatrix@R=8pt{
E(t) \ar[r] \ar[dd] & \Im(E)(t) \ar[dd] \ar[dr] \\
&& E(i) \\
E(s) \ar[r] & \Im(E)(s) \ar[ur]
}
\end{equation*}
for $s \leq t$. Set $\Im(E)(0) = \ast$. 

If $E$ is in $\Mon_{p}(L_{+})$, then the maps $E(t) \to \Im(E)(t)$ are isomorphisms. These constructions are functorial in presheaves $E$. 

We therefore have the following:

\begin{lemma}\label{lem 26}
There is a natural presheaf map $E \to \Im(E)$ such that $\Im(E)$ is in $\Mon_{p}(L_{+})$. This map is initial among all maps $E \to F$ with $F \in \Mon_{p}(L_{+})$, and so there is a natural bijection
\begin{equation*}
\hom_{\Mon_{p}(L_{+})}(\Im(E),F) \cong \hom(E,F),
\end{equation*}
so that the functor $E \mapsto \Im(E)$ is left adjoint to the inclusion of $\Mon_{p}(L_{+})$ in the presheaf category on $L_{+}$.
\end{lemma}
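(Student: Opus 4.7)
The plan is to verify three things in sequence: (i) the assignment $s \mapsto \Im(E)(s)$ assembles into a presheaf on $L_+$ equipped with a natural quotient map $E \to \Im(E)$; (ii) this presheaf lies in $\Mon_p(L_+)$; and (iii) the map $E \to \Im(E)$ enjoys the claimed universal property against objects of $\Mon_p(L_+)$, from which the adjunction bijection follows.

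For (i), I would note that whenever $s \leq t$ in $L$ the composite $E(t) \to E(s) \to E(i)$ coincides with $E(t) \to E(i)$, so the image of $E(t)$ in $E(i)$ is contained in the image of $E(s)$; this provides inclusions $\Im(E)(t) \subseteq \Im(E)(s) \subseteq E(i)$ that serve as the restriction maps of a contravariant functor on $L$. Extending by $\Im(E)(0) = \ast$ then yields a presheaf on $L_+$, and the epi parts $E(s) \twoheadrightarrow \Im(E)(s)$ of the canonical factorizations assemble into a presheaf morphism $E \to \Im(E)$ by uniqueness of epi--mono factorizations. For (ii), each restriction map of $\Im(E)$ over $L$ is by construction an inclusion of subsets of $E(i)$, hence a monomorphism, and $\Im(E)(0) = \ast$, so indeed $\Im(E) \in \Mon_p(L_+)$.

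For (iii), given $f \colon E \to F$ with $F \in \Mon_p(L_+)$, I would use that $F(s) \to F(i)$ is a monomorphism for every $s \in L$ in order to regard $F(s)$ as a subset of $F(i)$. Naturality of $f$ then gives $f_i(e|_i) = f_s(e) \in F(s)$ for every $e \in E(s)$, so $f_i$ carries the subset $\Im(E)(s) \subseteq E(i)$ into $F(s) \subseteq F(i)$. This produces the required map $\tilde{f}_s \colon \Im(E)(s) \to F(s)$; its naturality in $s$ and the identity $\tilde{f}_s \circ (E(s) \twoheadrightarrow \Im(E)(s)) = f_s$ are verified by comparing both sides after composing with $F(s) \hookrightarrow F(i)$. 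Uniqueness is then automatic: any candidate $g \colon \Im(E) \to F$ making the triangle commute must satisfy $(F(s) \hookrightarrow F(i)) \circ g_s = f_i|_{\Im(E)(s)}$, and since $F(s) \to F(i)$ is monic this forces $g_s = \tilde{f}_s$.

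The main obstacle, such as it is, is cosmetic rather than substantive: the argument requires juggling three avatars of the same map (as a map into $E(i)$, as the epi $E(s) \twoheadrightarrow \Im(E)(s)$, and as the inclusion $\Im(E)(s) \hookrightarrow E(i)$) while chasing naturality squares. The key conceptual point is that the monomorphism hypothesis on $F$ is precisely what lets one detect the factorization inside $F(i)$ and force it to be unique; no use is made of the interval assumption beyond the ambient setup of the section.
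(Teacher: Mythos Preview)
Your proposal is correct and follows the same approach as the paper: the construction of $\Im(E)$ via the epi--monic factorizations of $E(s) \to E(i)$ is exactly what the paper sets up in the paragraph preceding the lemma, and your verification of the universal property (using that $F(s) \hookrightarrow F(i)$ is monic to detect and force the factorization) is the intended argument. The paper in fact gives no separate proof at all---it states the lemma as an immediate consequence of the construction---so your write-up supplies the details the paper leaves to the reader.
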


\begin{corollary}
  Suppose that $E$ is a presheaf on $L_{+}$.
Then the morphism  $E \to L(\Im(E))$ is initial among all presheaf maps $E \to F$ such that $F$ is in $\Mon(L_{+})$.
\end{corollary}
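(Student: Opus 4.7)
The plan is to view this corollary as the composite of two universal properties already established: the left adjointness of $\Im$ from Lemma \ref{lem 26}, and the standard universal property of the associated sheaf functor, whose behavior on presheaves of monomorphisms is controlled by Lemma \ref{lem 22} and Corollary \ref{cor 21}.

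First I would observe that any $F \in \Mon(L_{+})$ is in particular a presheaf of monomorphisms, so that Lemma \ref{lem 26} supplies a natural bijection between presheaf maps $E \to F$ and $\Mon_{p}(L_{+})$-morphisms $\Im(E) \to F$, via precomposition with the canonical map $\eta_{1}: E \to \Im(E)$. Next, since $\Im(E)$ is in $\Mon_{p}(L_{+})$, Corollary \ref{cor 21} says that $\Im(E)$ is separated, and thus the canonical map $\eta_{2}: \Im(E) \to L(\Im(E))$ is (up to isomorphism) the associated sheaf map, where by Lemma \ref{lem 22} the target $L(\Im(E))$ lies in $\Mon(L_{+})$. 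Because $F$ is a sheaf, the universal property of the associated sheaf functor gives a unique factorization of the map $\Im(E) \to F$ through $\eta_{2}$ by a morphism $L(\Im(E)) \to F$ in $\Shv(L_{+})$, which automatically lives in $\Mon(L_{+})$.

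Composing, given any presheaf map $\alpha: E \to F$ with $F \in \Mon(L_{+})$, I obtain a unique $\Mon(L_{+})$-morphism $\tilde\alpha: L(\Im(E)) \to F$ with $\tilde\alpha \circ \eta_{2} \circ \eta_{1} = \alpha$, which is exactly the required factorization through the structural map $E \to L(\Im(E))$. Uniqueness follows by running the two universal properties backward: any factorization through $L(\Im(E))$ determines, by restriction along $\eta_{2}$, a factorization through $\Im(E)$, which by Lemma \ref{lem 26} is unique.

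I do not anticipate a real obstacle here; the statement is essentially the composition of the two adjunctions
\[
\Pre(L_{+}) \;\rightleftarrows\; \Mon_{p}(L_{+}) \;\rightleftarrows\; \Mon(L_{+}),
\]
with left adjoints $\Im$ and (the restriction of) $L$, respectively. The only subtle point worth stating explicitly in the write-up is why $L$ on the second factor has the desired universal property at objects of $\Mon_{p}(L_{+})$: namely, Corollary \ref{cor 21} promotes the general associated-sheaf adjunction to a genuine sheafification on separated presheaves, and Lemma \ref{lem 22} ensures the sheafification remains in $\Mon(L_{+})$.
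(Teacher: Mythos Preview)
Your proof is correct and follows essentially the same approach as the paper: both arguments combine the adjointness of $\Im$ from Lemma \ref{lem 26} with the universal property of the associated sheaf, using Lemma \ref{lem 22} to ensure $L(\Im(E)) \in \Mon(L_{+})$. Your write-up is simply more explicit, spelling out the role of Corollary \ref{cor 21} and framing the result as a composite of two adjunctions, whereas the paper's proof compresses this into two sentences.
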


\begin{proof}
 The object $L(\Im(E))$ is the sheaf associated to $\Im(E)$ and it is a sheaf of monomorphisms by Lemma \ref{lem 22}. Use also the adjointness assertion of Lemma \ref{lem 26}.
  \end{proof}

Colimits of fuzzy sets can be described by the following result: 

\begin{lemma}\label{lem 28}
Suppose that $A: J \to \Mon(L_{+})$ is a small diagram in the category of sheaves with monomorphisms on $L_{+}$. Form the colimit 
\begin{equation*}
X = L(\Im(\varinjlim_{j \in J}\ A(j)))
\end{equation*}
in $\Mon(L_{+})$, and let $\psi_{X}: X(i) \to L$ be the corresponding fuzzy set.
Then
\begin{equation*}
\psi_{X}(x) = \vee_{j,y}\ \psi_{A(j)}(y),
\end{equation*}
where the index is over all pairs $j,y$ such that $y \mapsto x$ under a composite of the form
\begin{equation}\label{eq 3}
A(j)(s) \to \varinjlim_{j}\ A(j)(s) \to \Im(\varinjlim_{j}\ A(j))(s) \to X(i).
\end{equation}
\end{lemma}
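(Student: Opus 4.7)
The plan is to compute $\psi_X(x) = \sup\{a \in L : x \in X(a)\}$ directly by unwinding the three-step construction $A \mapsto \varinjlim A(j) \mapsto \Im(-) \mapsto L(-)$, and then establishing the claimed equality $\psi_X(x) = T$, where $T := \vee_{(j,y):\, y \mapsto x}\, \psi_{A(j)}(y)$, by proving the two inequalities.

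First, I would identify $X(a)$ as a subset of $X(i)$. Write $P = \varinjlim_j A(j)$ for the presheaf colimit. Since $i \in L$ is initial in $L$, neither $\Im$ nor the sheafification $L$ changes the value at $i$, so $X(i) = \Im(P)(i) = P(i) = \varinjlim_j A(j)(i)$. By Lemma \ref{lem 26}, for $a \in L$ we have $\Im(P)(a) \subseteq X(i)$ equal to the image of $P(a) \to P(i)$, which consists of those $x \in X(i)$ for which there exist $(j,y)$ with $y \in A(j)(a)$ and $y \mapsto x$ along the composite (\ref{eq 3}). Combining Lemma \ref{lem 18} with Lemma \ref{lem 22}, and using that $\Im(P)$ is a separated presheaf of monomorphisms, the sheafified value $X(a) = L(\Im(P))(a)$ at any non-initial $a$ is the intersection $\bigcap_{b<a} \Im(P)(b)$ inside $X(i)$.

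Next, I would prove $T \leq \psi_X(x)$ by showing $x \in X(T)$, that is, $x \in \Im(P)(b)$ for every $b < T$. Given such $b$, using that $L$ is a dense total order I can pick $c$ with $b < c < T$; since $c < T$ and $T$ is the least upper bound of the elements $\psi_{A(j)}(y)$, the element $c$ fails to be an upper bound, so there exists a pair $(j,y)$ with $y \mapsto x$ and $\psi_{A(j)}(y) > c > b$. Then $y \in A(j)(\psi_{A(j)}(y)) \subseteq A(j)(b)$ by monotonicity of restriction, and its image in $X(i)$ is $x$, witnessing $x \in \Im(P)(b)$.

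For the reverse inequality $\psi_X(x) \leq T$, suppose $x \in X(a)$. Then for each $b < a$ we have $x \in \Im(P)(b)$, so there exist $(j_b, y_b)$ with $y_b \in A(j_b)(b)$ and $y_b \mapsto x$; this gives $\psi_{A(j_b)}(y_b) \geq b$, hence $T \geq b$ for every $b < a$. Because $L$ is dense, $a = \vee_{b<a} b$, so $T \geq a$. Applying this to $a = \psi_X(x)$ completes the proof.

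The main obstacle is the first inequality: the passage from the inequality $b < T = \vee_{(j,y)} \psi_{A(j)}(y)$ to the existence of a single pair $(j,y)$ with $\psi_{A(j)}(y) > b$ is exactly where the density and total order hypotheses on the interval $L$ are used; without them the supremum could fail to be ``approximated strictly from below'' by individual terms, and the membership $x \in \Im(P)(b)$ would not follow. The rest of the argument is a routine unwinding of the associated sheaf formula for intervals.
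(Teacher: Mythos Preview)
Your proof is correct and follows the same overall two-inequality strategy as the paper. The reverse inequality $\psi_{X}(x)\leq T$ is argued exactly as in the paper: for each $s<\psi_{X}(x)$ one produces a pair $(j,y)$ with $s\leq\psi_{A(j)}(y)$, and then uses $\psi_{X}(x)=\vee_{s<\psi_{X}(x)}\,s$.

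The one genuine difference is in the forward inequality $T\leq\psi_{X}(x)$. The paper does not compute $X(T)$ at all; it simply observes that for any pair $(j,y)$ with $y\mapsto x$, the canonical colimit map $A(j)\to X$ in $\Mon(L_{+})$ carries $y\in A(j)(\psi_{A(j)}(y))$ to an element of $X(\psi_{A(j)}(y))$ restricting to $x$, whence $\psi_{A(j)}(y)\leq\psi_{X}(x)$ term by term. You instead unpack the sheafification formula $X(T)=\bigcap_{b<T}\Im(P)(b)$ and verify membership directly. Both arguments are valid; the paper's is shorter and avoids the explicit description of $L(\Im(P))$, while yours has the virtue of making both inequalities run through the same concrete description of $X(a)$. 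A small remark: your insertion of an intermediate $c$ with $b<c<T$ via density is unnecessary in that step --- total order alone already gives some $(j,y)$ with $\psi_{A(j)}(y)>b$ from $b<T$. Density is genuinely needed only where you (and the paper) invoke $a=\vee_{b<a}\,b$.
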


\begin{proof}
We have
\begin{equation*}
X(i) = \varinjlim_{j}\ A(j)(i). 
\end{equation*}
and it follows that every $x \in X(i)$ is in the image of some composite (\ref{eq 3}).

Suppose that $y \in A(j)(s) \mapsto x$ under the composite (\ref{eq 3}). Then
\begin{equation*}
  \psi_{A(j)}(y) \leq \psi_{X}(x).
\end{equation*}
  This is true for all such pairs $(y,j)$ so that
\begin{equation*}
\vee_{j,y}\ \psi_{A(j)}(y) \leq \psi_{X}(x).
\end{equation*}

Suppose that $x \in X(i)$ lifts to $x' \in X(t)$, where $t$ is maximal. The element $x'$ is in the image of some composite
\begin{equation*}
A(i')(s) \to \varinjlim_{j}\ A(j)(t) \to \Im(\varinjlim_{j}\ A(j))(s) \to X(i)
\end{equation*}
for all $s<t$.
This means that there is an element $y' \in A(i')(t)$ which maps to $x'$ under the composite above, and so $s \leq \psi_{A(i')}(y')$ for all $s <t$. It follows that 
\begin{equation*}
t = \psi_{X}(x) \leq \vee_{i,y}\ \psi_{A(i)}(y).
\end{equation*}
\end{proof}

\begin{remark}
  Colimits of fuzzy sets and the left adjoint of the inclusion functor
  \begin{equation*}
    \Fuz(L)= \Mon(L_{+}) \subset \Shv(L_{+})
  \end{equation*}
  are described in Lemma 1.3 of Spivak's preprint \cite{fuzzy-Spivak}. In the present notation, that left adjoint is the functor $F \mapsto L^{2}\Im(F)$. The cocompleteness of the category of fuzzy sets follows from the cocompleteness of the sheaf category and the existence of the left adjoint of the inclusion.
  \end{remark}

\begin{example}
Form the union $A \cup B$ of two subsheaves $A,B \subset F$ of a sheaf $F \in \Mon(L_{+})$. Then there is a pushout diagram
\begin{equation*}
\xymatrix{
A \cap B \ar[r] \ar[d] & B \ar[d] \\
A \ar[r] & A \cup B
}
\end{equation*}
in $\Mon(L_{+})$. Here, $A \cup B$ is the sheaf $L(A \cup B)$ that is associated to the presheaf union $A \cup B$, which is in $\Mon_{p}(L_{+})$. Note that 
\begin{equation*}
A(i) \cup B(i) = (A \cup B)(i) = L(A \cup B)(i),
\end{equation*}
by the construction of $L(A \cup B)$. It follows from Lemma \ref{lem 28} that
\begin{equation*}
\psi_{A \cup B}(y) = \max\{ \psi_{A}(y), \psi_{B}(y) \}.
\end{equation*}
\end{example}

\section{Stalks}

Again, suppose that $L$ is a locale. Boolean localization theory involves a poset monomorphism $\omega: L \to B$ which takes values in a complete Boolean algebra, and induces a geometric morphism
\begin{equation*}
  \omega: \Shv(B) \to \Shv(L),
\end{equation*}
having  an inverse image functor $\omega^{\ast}: \Shv(L) \to \Shv(B)$ which is fully faithful.

The poset morphism $\omega: L \to B$ is relatively easy to describe, and the construction is reprised at the beginning of this section.

If $L$ is an interval, then the Boolean algebra $B$ can be identified with the power set $\mathcal{P}(L - \{1\})$ on the subset of $L$ that remains after removing the terminal object $1$. This leads immediately to a theory of stalks for sheaves and presheaves on $L$ (Example \ref{ex 31}, Lemma \ref{lem 32}, Lemma \ref{lem 34x}). These stalks have a simple construction in this case, and the expected behaviour of this theory can be verified directly (Lemma \ref{lem 32}).

This theory applies to the map $V(X) \to V(Y)$ of Vietoris-Rips systems that is associated to an inclusion $X \subset Y$ of data sets --- this is discussed in Example \ref{ex 34}. The present theory of stalks is used to show that the map $V(X) \to V(Y)$ is a local weak equivalence of simplicial sheaves on the locale $[0,R]^{op}$ ($R$ sufficiently large) if and only if $X=Y$ as data sets.

\medskip

Following \cite[p.51]{LocHom} and \cite{MM}, for $x \in L$, write
\begin{equation*}
  \neg x = \vee_{x \wedge y = 0}\ y.
\end{equation*}
The subobject $\neg\neg L$ of $L$ is defined to be the set of all $x \in L$ such that $\neg\neg x = x$. There is a frame morphism $\gamma: L \to \neg\neg L$ which is defined by $x \mapsto \neg\neg x$, since $\neg x = \neg\neg\neg x$ for all $x$. 

For $x \in L$, write $L_{\geq x}$ (as above) for the sublocale of objects $y$ with $y \geq x$. There is a homomorphism $\phi_{x}: L \to L_{\geq x}$ which is defined by $y \mapsto y \vee x$.

Let $\omega$ denote the composite frame morphism
\begin{equation}\label{eq 4}
L \xrightarrow{(\phi_{x})} \prod_{x \in L}\ L_{\geq x} \xrightarrow{(\gamma)}
\prod_{x \in L}\ \neg\neg L_{\geq x}.
\end{equation}
Then one knows (see, for example, \cite[p.52]{LocHom}) that $\omega$ is a monomorphism and that
\begin{equation*}
  B := \prod_{x\in L}\ \neg\neg L_{\geq x}
\end{equation*}
is a complete Boolean algebra.

Note that $L_{\geq 0} = L$ and that the morphism $\phi_{0}: L \to L_{\geq 0}$ is the identity.

The corresponding geometric morphism
\begin{equation*}
  \omega: \Shv(B) \to \Shv(L)
\end{equation*}
is a {\it Boolean localization} of $\Shv(L)$.
In particular, the inverse image functor
\begin{equation*}
  \omega^{\ast}: \Shv(L) \to \Shv(B)
\end{equation*}
is faithful, and is thus a {\it fat point} for the topos $\Shv(L)$.

The fat point assertion means that a map $E \to F$ of sheaves on $L$ is an monomorphism (respectively epimorphism, isomorphism) if and only if the induced map $\omega^{\ast}E \to \omega^{\ast}F$ is a monomorphism (respectively epimorphism, isomorphism) of sheaves on $B$. See \cite[Sec. 3.4]{LocHom} or \cite{J21}.

\begin{example}\label{ex 31}
Suppose that $L$ is totally ordered. If $x \in L$ and $x \ne 0$, then $x \wedge y = \min \{x,y\} = 0$ forces $y=0$. Thus,
\begin{equation*}
  \neg x =
  \begin{cases}
    0 & \text{if $x \ne 0$, and} \\
    1 & \text{if $x=0$.}
  \end{cases}
\end{equation*}

The corresponding Boolean algebra
\begin{equation*}
  B = \prod_{x \in L}\ \{x,1 \} \cong \prod_{x \in L-\{ 1 \}} \{x,1\}
\end{equation*}
is isomorphic to the power set $\mathcal{P}(L - \{ 1 \})$ of $L-\{1\}$, so that the sheaf category $\Shv(L)$ has enough points.

The poset map $\phi_{x}: L \to L_{\geq x}$ takes $y$ to $x$ if $y < x$ and takes $y$ to $y$ if $y \geq x$. It follows that the composite
\begin{equation*}
  L \xrightarrow{\phi_{x}} L_{\geq x} \xrightarrow{\gamma} \{x,1\}
\end{equation*}
takes $y$ to $x$ if $y \leq x$ and takes $y$ to $1$ if $y > x$.
The poset map $\omega: L \to \mathcal{P}(L-\{1\})$ therefore has the form
\begin{equation*}
  \omega(y) = L_{<y}
\end{equation*}
for $y \ne 0,1$.

For $x \in L-\{1\}$, the {\it stalk} $F_{x}$ of a sheaf $F$ on $L$ is defined by
\begin{equation}\label{eq 5}
  F_{x} = \varinjlim_{x < s}\ F(s).
\end{equation}
This colimit corresponds to the category of inclusions $\{ x \} \subset L_{<s}$, and so $F_{x}$ is the evaluation of the sheaf $\omega^{\ast}(F)$ at the set $\{ x \}$.
\medskip

The locale $L_{+}$ has a total order if $L$ has a total order.
For the object $0$, the stalk $F_{0}$ is isomorphic to the generic fibre:
\begin{equation*}
  F_{0} = \varinjlim_{s \in L}\ F(s) \cong F(i),
\end{equation*}
since $i$ is the initial object of $L$. The stalk
\begin{equation*}
F_{i} = \varinjlim_{i < s}\ F(s)
\end{equation*}
is more ``conventional''.
\end{example}

The following result is true by formal nonsense, given that we have a theory of stalks for sheaves on $L_{+}$ for a totally ordered locale $L$ in Example
\ref{ex 31}. The point of Lemma \ref{lem 32} (and its proof) is that it is easier to show directly that these stalks have the right properties in cases that one cares about for Data Science applications.

\begin{lemma}\label{lem 32}
Suppose that the locale $L$ is an interval and that the map $E \to F$ is a stalkwise epimorphism (respectively stalkwise monomorphism, stalkwise isomorphism) of sheaves of monomorphisms on $L_{+}$. Then the map $E \to F$ is an epimorphism (respectively monomorphism, isomorphism) of sheaves.
\end{lemma}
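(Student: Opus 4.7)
The plan is to handle the monomorphism and epimorphism cases separately; the isomorphism assertion then follows by combining them. For the monomorphism case I exploit the identification $F_0 = F(i)$ from Example \ref{ex 31}, where $0 \in L_+$ is the new initial element, so that stalkwise monomorphism at $0$ amounts to injectivity of $E(i) \to F(i)$. For any $a \in L$, the naturality square with vertices $E(a), F(a), E(i), F(i)$ has vertical restriction arrows that are monic (since $E, F \in \Mon(L_+)$), so injectivity along the bottom forces injectivity along the top, and hence $E \to F$ is a sheaf monomorphism.

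For the epimorphism case, let $\phi: E \to F$ denote the given map and use the characterization of a sheaf epimorphism as a locally surjective map. By Lemma \ref{lem 18} together with the density of the interval $L$, each $a \in L$ with $a > i$ has exactly two covering sieves (the trivial one and $\{b < a\}$), while $a = i$ admits only the trivial one. It therefore suffices to show that $E(i) \to F(i)$ is surjective and that for every $a \in L$ with $a > i$, every $y \in F(a)$, and every $b < a$ in $L$, the restriction $y|_b$ lifts through $E(b) \to F(b)$; the case $b = 0$ is automatic because $F(0) = \ast$. The first condition is immediate from stalkwise epi at $0$, since $E_0 = E(i)$ and $F_0 = F(i)$. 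For the second, let $y_b \in F_b$ be the image of $y$ under the coprojection $F(a) \to F_b$. Stalkwise epi at $b$ produces a class $[x]_b \in E_b$ with $\phi([x]_b) = y_b$, represented by some $x \in E(s_0)$ with $b < s_0$. Equality in the filtered colimit $F_b = \varinjlim_{b<s} F(s)$ is witnessed at some $s_1$ with $b < s_1$, $s_1 \leq s_0$, and $s_1 \leq a$, giving $\phi(x)|_{s_1} = y|_{s_1}$ in $F(s_1)$. Restricting further along $E(s_1) \to E(b)$ produces the required $\bar{x} \in E(b)$ with $\phi(\bar{x}) = y|_b$.

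I expect the main obstacle to be the filtered colimit bookkeeping in the epi case -- specifically, arranging the common witness $s_1$ below both $s_0$ and $a$ so that the lift descends cleanly to $E(b)$ -- together with the explicit use of density of $L$ to ensure $\{b < a\}$ really is a covering sieve of $a$. The isomorphism assertion then follows by combining the monomorphism and epimorphism cases.
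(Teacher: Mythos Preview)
Your proof is correct and follows essentially the same approach as the paper's. The monomorphism argument is identical (stalk at $0$ gives injectivity of $E(i)\to F(i)$, then the naturality square with monic restriction maps forces injectivity at every $a$). In the epimorphism case the paper is more terse: it simply asserts that for $u\in F(x)$ and $t<x$ the restriction $u_t$ lifts to some $v_t\in E(t)$, without writing out the filtered-colimit witness $s_1$ that you carefully arrange; your version makes explicit exactly the bookkeeping the paper suppresses.
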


One says that a map $E \to F$ of sheaves on $L_{+}$ is a {\it stalkwise} epimorphism if the induced functions $E_{x} \to F_{x}$ are surjective for all $x \in L - \{1 \}$. Similarly, stalkwise monomorphisms (respectively stalkwise isomorphisms) are defined by the requirement that all induced functions in stalks are injective (respectively bijective).

\begin{proof}[Proof of Lemma \ref{lem 32}]
Suppose that the map $E \to F$ is a stalkwise epimorphism.

By the observation in Example \ref{ex 31}, there is a natural isomorphism $F_{0} \cong F(i)$ for all sheaves $F$ on $L_{+}$, so that the map $E(i) \to F(i)$ is an epimorphism.

Suppose that $x \in L - \{1\}$. Then the collection of relations $t < x$ is a covering family. Take $u \in F(x)$ and let $u_{t} \in F(t)$ be its image under the restriction map $F(x) \to F(t)$, where $t < x$. Then $u_{t}$ represents an element of $F_{t}$, and the map $E_{t} \to F_{t}$ is an epimorphism. It follows that there is an element $v_{t} \in E(t)$ such that $v_{t} \mapsto u_{t} \in F(t)$. This means that the sheaf map $E \to F$ is a local epimorphism \cite[Sec. 3.2]{LocHom}, and it is therefore an epimorphism of sheaves.

If $E \to F$ is a stalkwise monomorphism, then the map $E(i) \to F(i)$ is a monomorphism. For each $x \in L$ there is a commutative diagram
\begin{equation*}
  \xymatrix{
    E(x) \ar[r] \ar[d] & E(i) \ar[d] \\
    F(x) \ar[r] & F(i)
  }
\end{equation*}
The horizontal morphisms are monomorphisms since $E$ and $F$ are sheaves of monomorphisms, so the map $E(x) \to F(x)$ is a monomorphism. This is true for all $x \in L$, and so the sheaf map $E \to F$ is a monomorphism.

If the map $E \to F$ is a stalkwise isomorphism, then it is both a stalkwise epimorphism and a stalkwise monomorphism, so that $E \to F$ is an epimorphism and a monomorphism of sheaves by the previous paragraphs. It follows that $E \to F$ is an isomorphism of sheaves.
\end{proof}

\begin{example}\label{ex 33}
  Suppose that the locale
  \begin{equation*}
    L= L_{1} \times \dots \times L_{k}
  \end{equation*}
  is a product of intervals $L_{i}$. 

  The construction of the locale morphism (\ref{eq 4}) preserves products, so that the sheaf category $\Shv(L)$ again has enough points.

  The poset map $\omega$ has the form
  \begin{equation*}
    \begin{aligned}
      L = L_{1} \times \dots \times L_{k} \xrightarrow{\omega \times \dots \times \omega} &\mathcal{P}(L_{1}-\{1 \}) \times \dots \times \mathcal{P}(L_{k}-\{1\})\\
      &=
    \mathcal{P}((L_{1}-\{1\}) \sqcup \dots \sqcup (L_{k}-\{1\})),
    \end{aligned}
  \end{equation*}
  and takes $(y_{1}, \dots, y_{k})$ to the disjoint union $(L_{1})_{<y_{1}} \sqcup \dots \sqcup (L_{k})_{<y_{k}}$.

  If $x \in L_{1}$ and $F$ is a sheaf on $L=L_{1} \times \dots \times L_{k}$, then
  \begin{equation*}
    F_{x} = \varinjlim_{x<s}\ F(s,0, \dots ,0).
  \end{equation*}
  In effect, the collection of all $k$-tuples $(s,0,\dots ,0)$ with $s>x$ is cofinal in the collection of all $k$-tuples $(s_{1}, \dots ,s_{k})$ with $s_{1}>x$.

  It follows that $F_{x}$ is the stalk at $x$ of the restriction of $F$ along the poset morphism
  \begin{equation*}
    i_{1}: L_{1} \to L_{1} \times \dots \times L_{k}
  \end{equation*}
  which is defined by $s \mapsto (s,0, \dots ,0)$.
\end{example}

We now show that the category of presheaves of sets on an interval $L$ has a theory of stalks that specializes to the definition of stalks for sheaves on $L$ that we have from Example \ref{ex 31} and Lemma \ref{lem 32}, and such that the associated sheaf map $\eta: F \to L^{2}(F)$ is a stalkwise isomorphism for all presheaves $F$.

The theory of stalks for presheaves on and interval is analogous to the theory of stalks for presheaves on a topological space, and the theory of stalks for presheaves on the \'etale site of a scheme.

\begin{lemma}\label{lem 34x}
Suppose that the locale $L$ is an interval, and let $F$ be a presheaf on $L$. Define
\begin{equation*}
F_{x} = \varinjlim_{x<s}\ F(s)
\end{equation*}
for all $x \in L - \{1\}$. Then we have the following:
\begin{itemize}
\item[1)] The set $F_{x}$ is the stalk at $x$ as in (\ref{eq 5}) if $F$ is a sheaf on $L$.
\item[2)] The associated sheaf map $\eta: F \to L^{2}F$ induces bijections $F_{x} \xrightarrow{\cong} L^{2}F_{x}$ for all $x \in L-\{1\}$.
\item[3)] A map $E \to F$ of presheaves induces bijections $E_{x} \xrightarrow{\cong} F_{x}$ for all $x \in L-\{1\}$ if and only if the map $L^{2}E \to L^{2}F$ of associated sheaves is an isomorphism.
\end{itemize}
\end{lemma}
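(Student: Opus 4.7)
The three clauses are tightly linked: (1) is immediate from the definitions, (2) is the main technical step and relies on the density of the ordering on $L$, and (3) follows by combining (2) with a direct stalkwise-iso-implies-iso argument for sheaves. For (1), if $F$ is a sheaf then the formula $\varinjlim_{x<s}\ F(s)$ is literally the expression in (\ref{eq 5}), so nothing needs to be checked.

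For (2), I would exploit the fact that $LF$ is separated and $L^{2}F = L(LF)$, so it suffices to prove the unit map $\eta_{x}\colon F_{x} \to (LF)_{x}$ is bijective for an arbitrary presheaf $F$; iterating then yields $F_{x} \cong (L^{2}F)_{x}$. Recall $LF(s) = \varprojlim_{0<t<s}\ F(t)$ for non-initial $s$. For injectivity, if $a, b \in F(s)$ and $\eta(a) = \eta(b)$, then $a|_{t} = b|_{t}$ for all $0 < t < s$; density of $L$ supplies $t \in (x, s)$, which forces $a$ and $b$ to represent the same class in $F_{x}$. For surjectivity, an element of $(LF)_{x}$ is represented by a compatible family $\{x_{t}\}_{0<t<s}$ for some $s > x$; by density pick $s' \in (x, s)$, and then $x_{s'} \in F(s')$ yields a class in $F_{x}$ whose image in $(LF)_{x}$ is the given element.

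For (3), the backward direction is immediate from (2) and functoriality of stalks. For the forward direction, (2) reduces the problem to showing that a sheaf map $\phi\colon G \to H$ on $L$ is an isomorphism whenever $\phi_{x}$ is bijective for every $x \in L - \{1\}$. I would verify $\phi_{s}\colon G(s) \to H(s)$ is bijective sectionwise. The case $s = i$ is trivial because $G(i) = H(i) = \ast$ (the sheaf condition applied to the empty cover). For $s \neq i$, Lemma \ref{lem 18} identifies $G(s)$ with $\varprojlim_{t<s}\ G(t)$ and similarly for $H$. For injectivity, given $\phi(a) = \phi(b)$, for each $t<s$ density produces $x \in (t,s)$, and stalkwise injectivity at $x$ yields equality of $a$ and $b$ in some $G(t')$ with $t < t' \leq s$, hence in $G(t)$; the sheaf condition concludes $a=b$. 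Surjectivity is the main obstacle: given $d \in H(s)$, for each $t < s$ one lifts the class $[d|_{t}] \in H_{t}$ to a class in $G_{t}$, realizes it by some $c \in G(t')$ with $t<t'\leq s$, notes that $\phi(c)$ and $d|_{t'}$ agree after restriction to some $t'' \in (t, t']$, and sets $c_{t}$ to be the restriction of $c$ to $t$. The delicate point is showing the family $\{c_{t}\}_{t<s}$ is compatible; this follows from the sectionwise injectivity of $\phi$ already established, since any two restrictions of lifts of $d$ must project to the same element of $G(t)$. The sheaf condition then assembles $\{c_{t}\}$ into the desired $c \in G(s)$ with $\phi(c) = d$.
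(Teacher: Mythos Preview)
Your argument is correct, but it follows a genuinely different route from the paper's. The paper never computes with the explicit formula $LF(s) = \varprojlim_{t<s} F(t)$. Instead it packages the stalk construction as the left Kan extension $\omega^{p}$ along the frame map $\omega: L \to \mathcal{P}(L-\{1\})$ of Example~\ref{ex 31}, so that $F_{x} = \omega^{p}F(\{x\})$ by definition of the Kan extension. Statement (2) then falls out of two formal facts: the natural isomorphism $L^{2}\omega^{p}F \cong \omega^{\ast}L^{2}F$ of left adjoints, and the (easy) observation that for presheaves on a power set the associated sheaf map is already a bijection on sections over singletons. Statement (3) is deduced either from Lemma~\ref{lem 32} or, more cleanly, from the full faithfulness of $\omega^{\ast}$.

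What your approach buys is self-containment: you never invoke the Boolean localization apparatus, and the only external input is Lemma~\ref{lem 18} plus density. The paper's approach buys conceptual placement --- it makes visible that the stalk theory on an interval is literally the restriction of the standard stalk theory on $\mathcal{P}(L-\{1\})$ along $\omega$, and the argument would transport to any situation where one has such a frame embedding into a power set. Two small points worth tightening in your write-up: in the surjectivity step of (2), when you say ``$[d\vert_{t}] \in H_{t}$'' you really mean the class of $d$ (equivalently of $d\vert_{t'}$ for any $t' \in (t,s]$) in the colimit $H_{t} = \varinjlim_{t<s'} H(s')$, since $d\vert_{t}$ itself lives in $H(t)$, not in the stalk; and in the injectivity step of (2) you should say that $\eta(a)$ and $\eta(b)$ become equal in $(LF)_{x}$, hence in some $LF(s')$ with $x<s'\leq s$, before unwinding. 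Neither affects the validity of the argument.
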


\begin{proof}
Write
  $I = L - \{1\}$
so that the  poset morphism $\omega$ of Example \ref{ex 31} has the form
$\omega: L \to \mathcal{P}(I)$.

The direct image (restriction) functors $\omega_{\ast}$ and inclusion functors $i$ for the various presheaf and sheaf categories fit into a commutative diagram
\begin{equation*}
  \xymatrix{
    \Pre(\mathcal{P}(I)) \ar[r]^{\omega_{\ast}} & \Pre(L) \\
    \Shv(\mathcal{P}(I)) \ar[u]^{i} \ar[r]_{\omega_{\ast}} & \Shv(L) \ar[u]_{i}
  }
  \end{equation*}
There is a natural isomorphism of left adjoint functors
\begin{equation*}
  L^{2}\omega^{p}(F) \xrightarrow{\cong} L^{2}\omega^{p}L^{2}(F) = \omega^{\ast}L^{2}(F)
\end{equation*}
that is induced by the associated sheaf map $\eta: F \to L^{2}(F)$.
Here, $\omega^{p}$ is the left Kan extension of the restriction functor $\omega_{\ast}$ on the presheaf level.

By definition of the left Kan extension of $\omega_{\ast}$, we have an isomorphism
\begin{equation*}
  \omega^{p}F(\{x \}) \cong \varinjlim_{x < s}\ F(s)
\end{equation*}
for all $x \in L-\{1\}$.

It follows that $F_{x} = \omega^{p}F(\{x\})$ for all presheaves $F$ and $x \in I=L-\{1\}$. Observe also that $F_{x}$ is the stalk of $F$ at $x$ which is defined in (\ref{eq 5}) if $F$ is a sheaf, so that statement 1) holds.

For $x \in I$ and a presheaf $E$ on the power set $\mathcal{P}(I)$, the associated sheaf map induces a bijection
\begin{equation*}
  E(\{x\}) \xrightarrow{\cong} L^{2}E(\{ x\}).
\end{equation*}
It follows that the functions
\begin{equation*}
F_{x} = \omega^{p}F(\{x \}) \xrightarrow{\omega^{p}\eta} \omega^{p}L^{2}F(\{x\}) = (L^{2}F)_{x}
\end{equation*}
are bijections for all presheaves $F$ on $L$ and $x \in L-\{1\}$, giving statement 2).

Statement 3) follows from Lemma \ref{lem 32}. Alternatively,
statement 3) is a consequence of statements 1) and 2), and the fact that the inverse image functor $\omega^{\ast}: \Shv(L) \to \Shv(\mathcal{P}(I))$ is fully faithful. 
\end{proof}

  \begin{example}\label{ex 34}
  Suppose, as in Example \ref{ex 10}, that $X \subset \mathbb{R}^{n}$ is a data set, with ordering $X \subset \mathbf{N}$, and choose $R > d(x,y)$ for all pairs of points $x,y \in X$.

  The association $s \mapsto V_{s}(X)$ for $s \in [0,R]$ defines a simplicial sheaf $V(X)$ (of Vietoris-Rips complexes) of monomorphisms on the totally ordered locale $[0,R]^{op}_{+}$. The stalk $V(X)_{t}$ for $t \in (0,R]$ is defined by
  \begin{equation*}
    V(X)_{t} = \varinjlim_{s < t}\ V_{s}(X),
  \end{equation*}
  where the indicated ordering is that of $[0,R]$.

  Note that
\begin{equation*}
  V(X)_{i} = \varinjlim_{s < R}\ V_{s}(X) = \Delta^{X},
\end{equation*}
because
we have chosen $R > d(x,y)$ for all pairs of points $x,y \in X$. 

Observe as well that for small numbers $t$, the stalk $V(X)_{t}$ is the discrete space on the set $X$.

Suppose that $X \subset Y \subset \mathbb{R}^{n}$ are data sets and $R > d(x,y)$ for all pairs of points $x,y \in Y$ (hence in $X$). Then the inclusion $X \subset Y$ defines a map of simplicial sheaves (of monomorphisms) $V(X) \to V(Y)$. This map is a local weak equivalence if and only if $X=Y$, because $V(X)_{t}$ and $V(Y)_{t}$ are discrete for small numbers $t$.
\end{example}


\bibliographystyle{plainnat}
\bibliography{spt}

\end{document}